 \newtheorem{theorem}{Theorem}[section]
 \newtheorem{proposition}[theorem]{Proposition}
 \newtheorem{definition}[theorem]{Definition}
 \newtheorem{remark}[theorem]{Remark}
\begin{document}
\title{\bf On the Gauss-Lucas theorem in the quaternionic setting}

\author{Sorin G. Gal\\
University of Oradea\\
Department of Mathematics\\ and Computer Science\\
Str. Universitatii Nr. 1\\
410087 Oradea,
Romania \\
galsorin23@gmail.com
\and
J. Oscar Gonz\'alez-Cervantes\\
Departamento de Matem\'aticas  \\
 E.S.F.M. del
I.P.N. 07338 \\
M\'exico D.F., M\'exico\\
jogc200678@gmail.com
\and Irene Sabadini\\
Dipartimento di Matematica\\
Politecnico di Milano\\
Via Bonardi 9\\
20133 Milano, Italy\\
irene.sabadini@polimi.it}
\maketitle

\begin{abstract}

In theory of one complex variable, Gauss-Lucas Theorem states that the critical points of a non constant  polynomial  belong to the convex hull of the set of  zeros of the polynomial. The exact analogue of this result cannot hold, in general, in the quaternionic case; instead, the critical points  of a non constant  polynomial  belong to the convex hull of the set of  zeros of the so-called symmetrization of the given polynomial. An  incomplete proof of this statement was
given in \cite{V}. In this paper we present a different but complete proof of this theorem  and we discuss a consequence.
 \end{abstract}

\textbf{AMS 2010 Mathematics Subject Classification}: {Primary 30G35; Secondary 30C15}

\textbf{Keywords}: Quaternionic  polynomials, critical ponts.

\section{Introduction}
The  Gauss-Lucas Theorem, e.g. see \cite{A}, \cite{E}, \cite{M},  states that the critical points of a non constant  complex polynomial $P$ i.e. the zeros of the derivative $P'$, belong to the convex hull of the set of  zeros of $P$. It is a useful tool in classical complex analysis with several applications, for example in approximation theory.   A well known  direct consequence of the Gauss-Lucas theorem  is that a complex polynomial  has a zero of modulus which is at least a constant related with the coefficients of the polynomial.
An equivalent form of the Gauss-Lucas Theorem states that given any  non constant complex polynomial $P$, then any zero of $P'$ is a convex combination of elements of $Z_P$, the set of zeros of $P$. In other words, if $P'(z)=0$, then there exist $z_1,\dots, z_n \in Z_p$ and there exist $\lambda_1\dots , \lambda_n \in [0,1]$ such that  {}  $z=\lambda_1 z_1 +\cdots +\lambda_n z_n$, {} such that  {} $\lambda_1+\cdots+ \lambda_n=1$.

The theory of quaternionic polynomials with coefficients on one side, for example on the right, is rather rich (see \cite{lam} and the references therein) and it had a resurgence of interest in recent times, after that it has been observed that one sided quaternionic polynomials are a special case of slice regular functions, see \cite{CSS,GSS}. Quaternionic polynomials have some peculiarities, for example they can admit an infinite number of zeros. To provide an example, it is sufficient to consider the polynomial $P(q)=q^2+1$ whose roots are the purely imaginary quaternions of norm $1$. In general, one can show that the zeros of a quaternionic polynomial are either points or 2-dimensional spheres in the set of quaternions identified with $\mathbb R^4$. In this case, we say that the polynomial has a spherical zero. The number of zeros is related with the degree of a polynomial as it is equal to the number of isolated zeros plus twice the number of spheres.

The behaviour of the derivative of a quaternionic polynomial may be unexpected. For example, $P(q)=q^2-q(\mathbf{i}+\mathbf{j})+\mathbf{k}$ has a zero of multiplicity $2$ at the point $\mathbf{i}$ but its derivative $P'(q)=2q- (\mathbf{i}+\mathbf{j})$ does not vanish at this point.

It is then natural to ask whether the Gauss-Lucas theorem holds directly in this framework.
In \cite{V} the author states that the critical points of a the derivative of a polynomial belong to the convex hull of the symmetrization of the polynomial. However the proof, which is done in a few lines, is very incomplete. In this paper we give a complete proof of this theorem and one direct consequence of this result.

\section{Preliminary results on quaternionic polynomials}

By $\mathbb H$ we denote the algebra of real quaternions, namely the set of elements of the form
 $q=q_0+q_1 {\bf i }+ q_2 {\bf j } +q_3 {\bf k}$, where $q_n \in \mathbb R$ for $n=0,1,2,3$ and the imaginary units ${\bf  i},  {\bf j} , {\bf k}$ satisfy:  ${\bf i}^2={\bf  j}^2={\bf k}^2=-1, \  {\bf ij}=-{\bf ji}={\bf k}, \  {\bf jk}=-{\bf kj}={\bf i}, \ {\bf ki}=-{\bf ik} ={\bf j}$. Moreover, $q_0$ is called the real part of $q$ and it is often denoted by ${\rm Re}(q)$ while ${\bf q}=q_1 {\bf i} + q_2{\bf j }+q_3 {\bf k}$ is called the vector part, or imaginary part. The norm of a quaternion is defined as $|q|=(q_0^2+q_1^2+q_2^2+q_3^2)^{1/2}$ and corresponds to the Euclidean norm when we identify $q$ with the element $(q_0,q_1,q_2,q_3)\in\mathbb R^4$.  The  purely imaginary quaternions ${\bf q}$ with norm $1$ form a $2$-dimensional sphere $\mathbb S^2$. Any $I\in\mathbb S^2$ is such that $I^2=-1$ and thus $\mathbb C(I)=\{x+Iy\ |\ x,y\in\mathbb R\}$ is a complex plane whose imaginary unit is $I$. Given any nonreal quaternion $q$ with nonzero vector part belongs to the complex plane $\mathbb C (I_q)$ where $I_q=q_1 {\bf i} + q_2{\bf j }+q_3 {\bf k}/|q_1 {\bf i} + q_2{\bf j }+q_3 {\bf k}|$. If a quaternion $q=x\in\mathbb R$, then $q=x+I\cdot 0$ for any $I\in\mathbb S^2$.
\\
Given a quaternion $\alpha$ written in the form $\alpha=a_0+Ia_1$, $a_0,a_1\in\mathbb R$, the set of elements of the form $a_0+Ja_1$ when $J\in\mathbb S^2$ forms a $2$-dimensional sphere denoted by $[\alpha]$. This $2$-sphere can equivalently be described as the set of all the elements of the form $r^{-1}\alpha r$ where $r\in\mathbb H\setminus \{0\}$.

In this paper, we will consider quaternionic polynomials with coefficients written on the right, i.e. $$\displaystyle P(q)= \sum_{n=0}^m q^n a_n, \qquad q\in\mathbb H,$$
where $a_n\in\mathbb H$ for $n=0,\dots,m$. These polynomials can be multiplied via the so-called $*$-multiplication by taking the convolution of their coefficients: if {$\displaystyle Q(q)=\sum_{n=0}^r q^n b_n$} then $$(P*Q)(q)=\sum_{n=0}^{m+r} q^n (\sum_{s+k=n} a_sb_k).$$ We note that this multiplication coincides with the pointwise multiplication only when the coefficients of $P$ are real, otherwise we have $(P*Q)(q)=0$ if $P(q)=0$ and if $P(q)\not=0$
$$
(P*Q)(q)=P(q)Q(P(q)^{-1}qP(q),
$$
see \cite{CSS} and \cite{GSS}.\\
Using the $*$-multiplication we can define the polynomial $P^s$ which will be crucial in the sequel:
\begin{definition} \label{def1} Given a quaternionic polynomial $P(q)= \sum_{n=0}^m q^n a_n$ we define the following polynomials:
$$\displaystyle P^c(q)= \sum_{n=0}^m q^n \overline{a}_n;$$
$$ \displaystyle P^s(q)= P(q)*P^c(q) = P(q)^c*P(q) = \sum_{n=0}^{2m} q^n \sum_{s+r=n} a_s \overline{a}_{r}.$$

\end{definition}
 The following result, see \cite{gm}, \cite{lam}, summarizes some important properties of the zeros of a quaternionic polynomial:
     \begin{theorem}
\begin{enumerate}
\item
A quaternion $\alpha$ is a zero of a (nonzero) polynomial $P(q)$ if and only if
the polynomial $q - \alpha$ is a left divisor of $P(q)$.
\item A sphere $\alpha$ consists of zeros of a (nonzero) polynomial $P(q)$ if and only if
the polynomial $q - 2{\rm Re}(\alpha) q +|\alpha|^2$ is a divisor of $P(q)$.
\item If $P(q) = (q - \alpha_1)*\ldots * (q - \alpha_n)$, where $\alpha_1, \ldots , \alpha_n \in\mathbb H$,
then $\alpha_1$ is a zero of $P$ and every other zero of $P$ belong to the spheres  $[\alpha_i]$, $i=2,\ldots, n$.
\item
If $P$ has two distinct zeros in an equivalence class $[\alpha]$, then all the elements in $[\alpha]$ are
zeros of $P$. In particular, a polynomial with real coefficients has either real or spherical zeros.
\end{enumerate}
\end{theorem}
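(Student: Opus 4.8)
The plan is to derive all four statements from two ingredients available for quaternionic polynomials under the $*$-product: the division algorithm --- we may write $P(q)=(q-\alpha)*Q(q)+c$ with $c\in\mathbb H$, and, since $\Delta_\alpha(q):=q^2-2{\rm Re}(\alpha)q+|\alpha|^2$ has real (hence central) coefficients, we may also divide $P$ by $\Delta_\alpha$ with a remainder of degree $\le1$ --- together with the evaluation identity recalled above, $(F*G)(q)=0$ when $F(q)=0$ and $(F*G)(q)=F(q)\,G\bigl(F(q)^{-1}qF(q)\bigr)$ otherwise. A preliminary remark I would record is that $\Delta_\alpha=(q-\alpha)*(q-\overline{\alpha})$ and that its zero set is exactly the sphere $[\alpha]$ (a short computation, splitting according to whether ${\rm Re}(q)={\rm Re}(\alpha)$ or $q\in\mathbb R$).

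For (1): in $P(q)=(q-\alpha)*Q(q)+c$, evaluating at $q=\alpha$ the identity gives $\bigl((q-\alpha)*Q\bigr)(\alpha)=0$, so $P(\alpha)=c$; hence $P(\alpha)=0$ iff $c=0$ iff $q-\alpha$ is a left divisor of $P$, and the converse implication is the identity once more. For (2), where $\alpha\notin\mathbb R$ so that $[\alpha]$ is a genuine $2$-sphere: if $\Delta_\alpha$ divides $P$, then $P=\Delta_\alpha\cdot Q$ vanishes on the whole zero set of $\Delta_\alpha$, i.e. on $[\alpha]$; conversely, from $P=\Delta_\alpha\cdot Q+(qc+d)$ with $c,d\in\mathbb H$, I would pick two distinct points $\beta_1\ne\beta_2$ of $[\alpha]$ (both zeros of $P$ by hypothesis) and use $\Delta_\alpha(\beta_i)=0$ to get $\beta_1c+d=\beta_2c+d=0$; subtracting gives $(\beta_1-\beta_2)c=0$, so $c=0$ and then $d=0$, whence $\Delta_\alpha$ divides $P$.

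For (3) I would induct on $n$, writing $P=(q-\alpha_1)*G$ with $G=(q-\alpha_2)*\cdots*(q-\alpha_n)$. That $\alpha_1\in Z_P$ is item (1). If $\beta$ is a zero of $P$ with $\beta\ne\alpha_1$, then $F(\beta)=\beta-\alpha_1\ne0$, and the identity gives $0=P(\beta)=(\beta-\alpha_1)\,G(\widetilde\beta)$ with $\widetilde\beta:=(\beta-\alpha_1)^{-1}\beta(\beta-\alpha_1)$, a conjugate of $\beta$ lying on $[\beta]$; thus $G(\widetilde\beta)=0$, and by the inductive hypothesis $\widetilde\beta\in\bigcup_{i=2}^n[\alpha_i]$, so $[\beta]=[\widetilde\beta]=[\alpha_i]$ for some $i\ge2$, and in particular $\beta\in[\alpha_i]$. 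For (4): two distinct zeros $\beta_1\ne\beta_2$ of $P$ inside a class $[\alpha]$ are exactly the input needed for the division-by-$\Delta_\alpha$ argument of (2), which then gives $\Delta_\alpha\mid P$ and hence $[\alpha]\subseteq Z_P$ by (2). For the final assertion, if $P$ has real coefficients then $\overline{P(q)}=P(\overline q)$ for every $q$, so a nonreal zero $\beta$ comes with the distinct zero $\overline\beta\in[\beta]$, and the first part of (4) forces all of $[\beta]$ to consist of zeros of $P$.

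I expect the delicate point to be the conjugation bookkeeping in (3)--(4): one must carefully track the twisted argument $F(q)^{-1}qF(q)$ produced by the evaluation identity and use repeatedly that it lies on the same sphere as $q$, so that a conjugate of $\beta$ belonging to some $[\alpha_i]$ already forces $\beta\in[\alpha_i]$. The remaining ingredients --- a precise statement (or citation, e.g. from \cite{lam,gm}) of the division algorithm with uniqueness of the remainder, and the explicit description of the zero set of $\Delta_\alpha$ --- are routine.
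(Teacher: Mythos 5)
The paper does not actually prove this theorem: it is stated as a known result and attributed to the references \cite{gm} and \cite{lam}, so there is no in-paper argument to compare yours against. That said, your blind proof is correct and is essentially the standard derivation found in those sources: the two ingredients you isolate (the division algorithm in $\mathbb H[q]$ for a monic divisor, with the remainder constant for $q-\alpha$ and of degree $\le 1$ for the characteristic polynomial $\Delta_\alpha(q)=q^2-2{\rm Re}(\alpha)q+|\alpha|^2$, whose zero set is exactly $[\alpha]$, together with the evaluation identity $(F*G)(q)=F(q)\,G\bigl(F(q)^{-1}qF(q)\bigr)$ for $F(q)\neq 0$) are precisely what is needed, and you apply them correctly: (1) is the remainder computation; (2) uses that $\Delta_\alpha$ has real, hence central, coefficients so that $\Delta_\alpha * Q$ evaluates pointwise, plus the linear-remainder trick with two distinct points of the sphere; (3) is the Gordon--Motzkin conjugation argument, where you correctly track that the twisted point $(\beta-\alpha_1)^{-1}\beta(\beta-\alpha_1)$ stays on $[\beta]$; and (4) reuses the remainder argument of (2) and, for real coefficients, the observation $\overline{P(q)}=P(\bar q)$. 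Two minor points worth making explicit in a written version: in (2) and (4) one should note, as you do, that $[\alpha]$ must be a genuine sphere (for $\alpha\in\mathbb R$ the ``sphere'' degenerates and the equivalence of (2) fails, e.g. for $P(q)=q-\alpha$); and in (2) the word ``divisor'' is unambiguous only because $\Delta_\alpha$ has central coefficients, so left and right divisibility coincide. With those caveats recorded, the proposal is a complete and correct proof.
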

We also note the following:
\begin{remark}\label{rem11}{\rm The zeros of the polynomial $P^s$ are the spheres (which may be reduced to real points) associated to the zeros of $P$, see \cite{CSS}, \cite{GSS}. In other words, the sphere $[\alpha]=[a_0+Ia_1]$, $a_1\not=0$,  consists of zeros of $P^s$ if and only if there exists $J\in\mathbb{S}^2$ such that $a_0+Ja_1$ is a zero of $P$. The real point $\alpha$ is a zero of $P^s$ if and only if it is a zero of $P$.
}
\end{remark}
\section{A quaternionic version of the Gauss-Lucas Theorem}

The derivative of $P(q)$ is defined in the customary way as $P'(q)=\sum_{n=1}^m q^{n-1}na_n$.
By $Z_P$ we denote the set of zeros of $P$ and the zeros of $P'$ are called critical points of $P$.\\
Let  $U\subset\mathbb H$  be a non empty set, then     $Kull(U)$  is the    intersection of all convex subsets of $\mathbb H$ containing $U$, and it  is called the convex hull of  $U$.
		Since  $\mathbb H$ can be naturally identified with $\mathbb R^4$,  it follows   that  $q\in Kull(U)$  if and only if $q$ is a convex combination of elements of $U$.

The following result is not surprising since it deals with polynomials with real coefficients, and thus it mimics the result in the classical case:
\begin{proposition}\label{real}
Given  $$\displaystyle P(q)= \sum_{n=0}^m q^n a_n, $$ where $a_n \in \mathbb R$ for all $n=1,\dots, m$. Then the critical points of $P$ belong to  $Kull(Z_P)$.
\end{proposition}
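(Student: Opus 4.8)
The plan is to reduce the statement to the classical one-complex-variable Gauss--Lucas theorem by restricting $P$ to complex slices $\mathbb{C}(I)$.

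First I would record that, since the coefficients $a_n$ are real, every complex plane $\mathbb{C}(I)$, $I\in\mathbb{S}^2$, is invariant under $P$: if $z\in\mathbb{C}(I)$ then $z^n\in\mathbb{C}(I)$, hence $z^na_n\in\mathbb{C}(I)$, hence $P(z)=\sum_{n=0}^m z^n a_n\in\mathbb{C}(I)$; the same holds for $P'$, whose coefficients $na_n$ are real too. Identifying $\mathbb{C}(I)$ with $\mathbb{C}$, the restriction $p_I:=P|_{\mathbb{C}(I)}$ is then an ordinary complex polynomial with real coefficients, of degree $\deg P$, and one checks at once that its usual complex derivative equals $P'|_{\mathbb{C}(I)}$. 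Moreover the roots of $p_I$ in $\mathbb{C}(I)$ are exactly the points of $Z_P\cap\mathbb{C}(I)$, so in particular they lie in $Z_P$.

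Then, given a critical point $q_0$ of $P$, so $P'(q_0)=0$, I would pick $I\in\mathbb{S}^2$ with $q_0\in\mathbb{C}(I)$ (take $I=I_{q_0}$ if $q_0\notin\mathbb{R}$, and any $I$ otherwise). Since $(p_I)'(q_0)=P'(q_0)=0$, the point $q_0$ is a critical point of the complex polynomial $p_I$. Applying the classical Gauss--Lucas theorem inside $\mathbb{C}(I)$ --- which is legitimate because $P$, hence $p_I$, is non constant --- gives roots $z_1,\dots,z_k\in\mathbb{C}(I)$ of $p_I$ and scalars $\lambda_1,\dots,\lambda_k\in[0,1]$ with $\sum_{j}\lambda_j=1$ and $q_0=\sum_{j}\lambda_j z_j$. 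Each $z_j$ belongs to $Z_P$, and since a quaternion lies in $Kull(Z_P)$ precisely when it is a convex combination of elements of $Z_P$ (as recalled above), I conclude $q_0\in Kull(Z_P)$.

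I do not expect any serious obstacle: the entire argument is the slice reduction, which works smoothly exactly because real coefficients make $P$ and $P'$ preserve all the planes $\mathbb{C}(I)$ simultaneously. The points that need a little care are: (i) checking that $p_I$ is a genuine complex polynomial of degree $\deg P$ whose complex derivative is $P'|_{\mathbb{C}(I)}$, so that the two notions of critical point agree; (ii) the case of a real critical point, where the slice $\mathbb{C}(I)$ is chosen arbitrarily; and (iii) noting that a convex combination of points of $\mathbb{C}(I)$ is a fortiori a convex combination in $\mathbb{H}\cong\mathbb{R}^4$, hence lies in $Kull(Z_P)$.
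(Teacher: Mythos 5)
Your proposal is correct and follows essentially the same route as the paper: restrict $P$ to the slice $\mathbb{C}(I)$ containing the critical point (possible because the real coefficients make every slice invariant), apply the classical Gauss--Lucas theorem there, and observe that the resulting convex combination of slice roots lies in $Kull(Z_P)$. Your write-up merely spells out the verifications (slice invariance, agreement of derivatives, the real critical point case) that the paper leaves implicit.
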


 \begin{proof} It relies on the classical Gauss-Lucas theorem in the complex case. Consider $v\in \mathbb H$ such that $\displaystyle 0=P'(v)=  \sum_{n=1}^m v^{n-1}n a_n$. For a suitable $I\in\mathbb S^2$, we can write $v=x+Iy$ where $x,y\in \mathbb R$. Then $v\in\mathbb C(I)$ is a critical point of $P\mid_{\mathbb C(I)}$, which is a complex polynomial. From the classical Gauss-Lucas Theorem one has that $v$ is a convex combination of elements of $Z_{P\mid_{\mathbb C(I)}}$ and thus, in particular, $v\in Kull(Z_P)$ and the assertion follows.
 \end{proof}
\begin{remark}{\em
The previous result fails when the coefficients of a polynomial are not all real numbers. Let, for example,
$$ P(q)= q^2 \frac{1}{2} + q{\bf i }+  {\bf  j}=\frac 12(q+{\bf i}+{\bf ij})*(q+{\bf i}-{\bf ij}).$$
The equation $(q-a)*(q-b)$ has zeros $q=a$ and $q=(b-\bar a)^{-1}b(b-\bar a)$ so, in this case there is just the zero $-{\bf i}-{ij}$ with multiplicity two. It follows that $Z_P=\{  -  {\bf  i} - {\bf  ij}\}$ and  $Z_{P'}=\{- {\bf  i } \} $. Therefore,  $Z_{P'}\not\subset Kull(Z_P)= \{- {\bf  i } - {\bf  ij} \}$.
}
\end{remark}
In order to prove the appropriate generalization of the Gauss-Lucas theorem for quaternionic polynomial, namely that  $Z_{P'}\subset Kull(Z_{P^s})$, we need two technical results:
\begin{proposition}\label{prop1}
Let $P(q)$ be a quaternionic polynomial.
Then $Z_{P'\mid_{\mathbb C(I)}} \subset Kull(Z_{P^s\mid_{\mathbb C(I)}})$, for each $I\in\mathbb S^2$ if and only if $Z_{P'} \subset Kull(Z_{P^s})$.
\end{proposition}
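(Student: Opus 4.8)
The plan is to prove the two implications of the biconditional separately, expecting the forward direction (slicewise containment for every $I$ implies the global containment) to be essentially formal, with the converse carrying all the weight. For the forward direction: given $v\in Z_{P'}$, the quaternion $v$ lies on some slice $\mathbb C(I)$ — take $I=I_v$ when $v\notin\mathbb R$, and any $I\in\mathbb S^2$ when $v\in\mathbb R$ — and since $Z_{P'\mid_{\mathbb C(I)}}=Z_{P'}\cap\mathbb C(I)$, the hypothesis gives $v\in Kull(Z_{P^s\mid_{\mathbb C(I)}})$; as $Z_{P^s\mid_{\mathbb C(I)}}=Z_{P^s}\cap\mathbb C(I)\subseteq Z_{P^s}$ and $Kull(\cdot)$ is monotone, $v\in Kull(Z_{P^s})$.

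For the converse I would first record two structural facts about $P^s$. By Definition~\ref{def1} the $n$-th coefficient of $P^s$ is $\sum_{s+r=n}a_s\overline{a_r}$, which is fixed by quaternionic conjugation (it is an anti-automorphism; one reindexes $s\leftrightarrow r$), hence real; so $P^s$ has real coefficients. Consequently, by item~4 of the Theorem of Section~2, $Z_{P^s}$ is a union of real points and of full equivalence classes $[\alpha_k]=\{{\rm Re}(\alpha_k)+Jr_k:J\in\mathbb S^2\}$, each of which is a $2$-sphere centred at a point of the real axis; moreover $Z_{P^s\mid_{\mathbb C(I)}}=Z_{P^s}\cap\mathbb C(I)$ and $[\alpha_k]\cap\mathbb C(I)=\{{\rm Re}(\alpha_k)+Ir_k,\,{\rm Re}(\alpha_k)-Ir_k\}$ for every $I\in\mathbb S^2$.

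The converse then reduces to the geometric claim that, with $S:=Z_{P^s}$, one has $Kull(S)\cap\mathbb C(I)\subseteq Kull(S\cap\mathbb C(I))$ for every $I$: indeed this yields $v\in Z_{P'\mid_{\mathbb C(I)}}=Z_{P'}\cap\mathbb C(I)\subseteq Kull(Z_{P^s})\cap\mathbb C(I)\subseteq Kull(Z_{P^s}\cap\mathbb C(I))=Kull(Z_{P^s\mid_{\mathbb C(I)}})$. To prove the claim I would take $v=\sum_{j=1}^M\lambda_j w_j\in\mathbb C(I)$ with $\lambda_j\ge 0$, $\sum_j\lambda_j=1$, $w_j\in S$ (a finite sum, by Carath\'eodory's theorem applied to $\mathbb H\cong\mathbb R^4$), and decompose $w_j={\rm Re}(w_j)+\mathbf b_j$ into real and vector parts; since $P^s$ has real coefficients the whole class $[w_j]$ lies in $S$, so ${\rm Re}(w_j)\pm I|\mathbf b_j|\in S\cap\mathbb C(I)$. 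Writing $v=\alpha+I\beta$ with $\alpha,\beta\in\mathbb R$, the condition $v\in\mathbb C(I)$ gives $\alpha=\sum_j\lambda_j{\rm Re}(w_j)$ and $\beta=\sum_j\lambda_j s_j$, where $s_j$ is the component of $\mathbf b_j$ along $\mathbb{R}I\subseteq{\rm Im}\,\mathbb H$, so $|s_j|\le|\mathbf b_j|$. Then ${\rm Re}(w_j)+Is_j$ lies on the segment joining ${\rm Re}(w_j)-I|\mathbf b_j|$ and ${\rm Re}(w_j)+I|\mathbf b_j|$, both in $S\cap\mathbb C(I)$, hence ${\rm Re}(w_j)+Is_j\in Kull(S\cap\mathbb C(I))$; and $\sum_j\lambda_j({\rm Re}(w_j)+Is_j)=\alpha+I\beta=v$, so $v\in Kull(S\cap\mathbb C(I))$.

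I expect the only genuinely non-formal point to be this projection argument: one has to see that replacing each zero $w_j$ of $P^s$ by its orthogonal projection onto the line $\mathbb{R}I$ both preserves real parts and keeps every summand inside the chord along which the relevant class meets $\mathbb C(I)$ — and this works precisely because real coefficients force $Z_{P^s}$ to be a union of spheres symmetric about, and centred on, the real axis. Everything else is routine, resting only on monotonicity of the convex hull, the reduction of membership in $Kull(\cdot)$ to finite convex combinations, and the fact that every quaternion lies on some slice $\mathbb C(I)$.
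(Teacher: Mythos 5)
Your proof is correct and follows essentially the same route as the paper's: the forward implication is the same monotonicity-plus-union argument, and for the converse both proofs exploit that the real coefficients of $P^s$ force $Z_{P^s}$ to be a union of spheres centred on the real axis, and then push a finite convex combination representing the critical point into the slice $\mathbb C(I)$. The only (cosmetic) difference is that you write $v$ exactly as a convex combination of the orthogonal projections of the $w_j$ onto $\mathbb C(I)$, each lying on the chord joining the two points where its sphere meets the slice, whereas the paper instead bounds $|y|\le y'$ and sandwiches $q=x+Iy$ on the segment between $q'=x'+Iy'$ and $\overline{q'}$ --- the same axial-symmetry and triangle-inequality idea in a slightly different packaging.
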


 \begin{proof}
For of all, let us note that if $Z_{P'\mid_{\mathbb C(I)}} \subset Kull(Z_{P^s\mid_{\mathbb C(I)}})$, for every $I\in\mathbb S^2$ then $Z_{P'\mid_{\mathbb C(I)}} \subset Kull(Z_{P^s\mid_{\mathbb C(I)}})\subseteq Kull(Z_{P^s})$. Thus
$$Z_{P'}=\bigcup_{I\in\mathbb S^2}Z_{P'\mid_{\mathbb C(I)}} \subset  Kull(Z_{P^s}).$$

To prove the converse, we assume that  $Z_{P'} \subset  Kull(Z_{P^s})$ and we consider $q\in Z_{P'\mid_{\mathbb C(I)}}$ and we write $q=x+Iy$. Since $q$ is a zero of $P'$ we have $q \in Kull(Z_{P^s})$ and we can write $$q=\sum_{l=1}^n \rho_l a_l + \sum_{k=1}^m \delta_k b_k ,$$
where  $a_1,\dots, a_n \in  Z_{P^s}  \cap \mathbb C(I)$, {} $b_1,\dots, b_m \in  Z_{P^s}  \setminus \mathbb C(I)$, {} $0\leq \rho_1, \dots \rho_n,\delta_1, \dots \delta_m\leq 1 $, {} and   $\displaystyle 1=\sum_{l=1}^n \rho_l + \sum_{k=1}^m \delta_k $. Since $P^s$ has real coefficients, it has spherical zeros therefore  if $a_l= r_l+ I s_l $ and  $b_k=x_k+ I_k y_k$ with $r_l,s_l,x_k,y_k\in \mathbb  R$ and $I_k\in \mathbb S^2\setminus\{I\}$, then   $c_l = r_l+ I |s_l | $ and $d_k=x_k+ I|y_k| $ belong to $Z_{P^s} \cap \mathbb C(I) $, for $l=1,\dots, l$ and  $k=1,\dots, m$.
Let us set
$$
q'=\sum_{l=1}^n \rho_l c_l + \sum_{k=1}^m \delta_k d_k =x'+Iy'.
$$

 It is clear that $q', \overline{q'}\in Kull(Z_{P^s\mid_{\mathbb C(I)}}) $ and that $Re(q)=Re(q')$ while the imaginary part can be written as
  $$ y I=\sum_{l=1}^n \rho_l s_l  I  +     \sum_{k=1}^m \delta_k   I_k y_k . $$
Thus
 $$|y |\leq    |\sum_{l=1}^n \rho_l s_l |  +     |\sum_{k=1}^m \delta_k   I_k y_k |  \leq   \sum_{l=1}^n \rho_l |s_l  |  +     \sum_{k=1}^m \delta_k  | y_k |$$
$$\leq  \sum_{l=1}^n \rho_l | s_l |    +     \sum_{k=1}^m \delta_k  | y_k | = y' $$
Then since $-y' \leq y \leq y' $ i.e.  $q=x+Iy$ is in the segment with end points $q'=x'+Iy'$ and $ \overline{q'}=x'-Iy'$. Therefore   $q\in  Kull(Z_{P^s\mid_{\mathbb C(I)}})$.
\end{proof}

\begin{remark}{\rm
Given the quaternionic polynomial $P$, its restriction to a complex plane $\mathbb C(I)$ can be written as  $P\mid_{\mathbb C(I)}(z) = P_1(z) +P_2(z) J$, where $P_1$ and $P_2$ are complex-valued polynomials in $z\in\mathbb C(I)$ and $J\in\mathbb S^2$ orthogonal to $I$. Since, by linearity, $P'\mid_{\mathbb C(I)} = P'_1 +P'_2 J$ one has that   $Z_{P_1'}\cap Z_{P_2'}=  Z_{P'\mid_{\mathbb C(I)}}$.
 \\
 The previous Proposition \ref{prop1} gives that  $Z_{P'} \subset Kull(Z_{P^s})$ if and only if  $Z_{P'\mid_{\mathbb C(I)}} \subset Kull(Z_{P^s\mid_{\mathbb C(I)}})$, for every $I\in\mathbb S^2$, or equivalently  $Z_{P_1'}\cap Z_{P_2'}\subset Kull (Z_{P^s}\mid_{\mathbb C(I)} )$, for every $I\in\mathbb S^2$.
}
\end{remark}
Another useful formula which will be used below is the following, see e.g. \cite{CSS}:
\begin{equation}\label{Ps}
P ^s\mid_{\mathbb C(I)} (z) =P_1(z) \overline{P_1(\bar z)} +  P_2(z)  \overline{P_2(\bar z) },  \qquad \forall z\in\mathbb C(I).
\end{equation}

\begin{proposition}\label{pro2} Let $P_1$ and $P_2$ two complex polynomials and set $Q(z)=P_1(z)\overline{P_1(\bar z)} + P_2(z)\overline{P_2(\bar z)}$, for each $z\in\mathbb C$.  Then  $Z_{P_1'}\cap Z_{P_2'}\subset Kull(Z_Q)$.
\end{proposition}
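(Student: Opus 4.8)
The plan is to feed the common critical point into the classical (complex) Gauss--Lucas theorem separately for $P_1$ and for $P_2$, and then to transport the resulting information from $Z_{P_1},Z_{P_2}$ to $Z_Q$ by a convexity/separation argument. Fix $z_0\in Z_{P_1'}\cap Z_{P_2'}$, and assume (as we may, the remaining degenerate cases such as a constant $P_j$ being handled directly) that $P_1,P_2$ are non-constant. If $z_0$ is a common zero of $P_1$ and $P_2$, then $Q(z_0)=P_1(z_0)\overline{P_1(\bar z_0)}+P_2(z_0)\overline{P_2(\bar z_0)}=0$, so $z_0\in Z_Q\subseteq Kull(Z_Q)$ and we are done. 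Otherwise, for each index $j$ with $P_j(z_0)\neq 0$ the hypothesis $P_j'(z_0)=0$ together with the classical Gauss--Lucas theorem yields $z_0\in Kull(Z_{P_j})$, while if $P_j(z_0)=0$ this membership is trivial. Hence in every case $z_0\in Kull(Z_{P_1})\cap Kull(Z_{P_2})$. I record also that $Q$ has real coefficients and $Q(x)=|P_1(x)|^2+|P_2(x)|^2\ge 0$ for $x\in\mathbb{R}$, so that $Kull(Z_Q)$ is symmetric with respect to the real axis.

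It therefore suffices to prove the purely geometric containment $Kull(Z_{P_1})\cap Kull(Z_{P_2})\subseteq Kull(Z_Q)$, since $z_0$ already lies in the left-hand side. I would deduce this from the following statement about extreme zeros, which I abbreviate (I): for every closed half-plane $\overline{H}\subseteq\mathbb{C}$, if both $P_1$ and $P_2$ have a zero in $\overline{H}$, then so does $Q$. Granting (I), suppose some $p\in Kull(Z_{P_1})\cap Kull(Z_{P_2})$ failed to lie in $Kull(Z_Q)$; separating $p$ from the finite set $Z_Q$ produces a unit vector $u$ and the closed half-plane $\overline{H}=\{z:\mathrm{Re}(\bar u z)\ge \mathrm{Re}(\bar u p)\}$, which contains no zero of $Q$. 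But $p\in Kull(Z_{P_j})$ forces $\max_{a\in Z_{P_j}}\mathrm{Re}(\bar u a)\ge\mathrm{Re}(\bar u p)$, so each $P_j$ has a zero in $\overline{H}$; then (I) produces a zero of $Q$ in $\overline{H}$, a contradiction. One checks in small cases (e.g. degree one, where the relevant zero of $Q$ has projection equal to the average of the projections of the zeros of $P_1$ and $P_2$) that (I) does hold, which is what makes this reduction plausible.

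The heart of the matter, and the step I expect to be hardest, is establishing (I). Here I would use the explicit expression $Q(z)=P_1(z)\overline{P_1(\bar z)}+P_2(z)\overline{P_2(\bar z)}$ from \eqref{Ps}, which exhibits $Q(z)$ as the Hermitian pairing of the vectors $(P_1(z),P_2(z))$ and $(P_1(\bar z),P_2(\bar z))$; thus $Q\neq 0$ on $\overline{H}$ means exactly that these two vectors are never Hermitian-orthogonal as $z$ ranges over $\overline{H}$. The plan is to run an argument-principle/zero-counting argument along the boundary line $\partial\overline{H}$: parametrising $\partial\overline{H}$ and comparing the variation of the argument of $Q$ with those of $P_1$ and $P_2$, one controls the number of zeros of each of $P_1,P_2,Q$ in $\overline{H}$, and the leading-degree behaviour of $Q=P_1P_1^{c}+P_2P_2^{c}$ should force a zero of $Q$ as soon as both summands contribute one. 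The genuine obstacle is that the involution $z\mapsto\bar z$, which is tied to the operation $P\mapsto P^{c}$ defining $Q$, does \emph{not} preserve $\overline{H}$; consequently the zero count for $Q$ couples $\overline{H}$ with its mirror image across $\mathbb{R}$, and disentangling these two contributions is the crux. As a complementary source of leverage I would exploit that, because $P_1'(z_0)=P_2'(z_0)=0$, the point $z_0$ is a critical point of \emph{every} linear combination $c_1P_1+c_2P_2$, hence by classical Gauss--Lucas it lies in $Kull(Z_{c_1P_1+c_2P_2})$ for all $(c_1,c_2)$; a separating half-plane for $z_0$ and $Z_Q$ would then make the map $z\mapsto[P_1(z):P_2(z)]$ surjective onto $\mathbb{CP}^1$ over that half-plane while $Q$ remained zero-free there, and the remaining task is to show that these two properties are incompatible.
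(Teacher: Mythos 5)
Your first two paragraphs are sound as far as they go: classical Gauss--Lucas does place $z_0$ in $Kull(Z_{P_1})\cap Kull(Z_{P_2})$ when both $P_j$ are nonconstant, and your separation argument correctly shows that statement (I) would yield $Kull(Z_{P_1})\cap Kull(Z_{P_2})\subseteq Kull(Z_Q)$. But (I) is never proved, and it is not a technical remnant --- it is the entire content of the proposition. What you offer for it are two strategies (an argument-principle count along $\partial\overline{H}$, and an incompatibility between zero-freeness of $Q$ and surjectivity of $z\mapsto[P_1(z):P_2(z)]$), and in both cases you yourself name the obstruction --- the involution $z\mapsto\bar z$ built into $Q$ does not preserve $\overline{H}$ --- without overcoming it; the write-up ends with ``the remaining task is to show that these two properties are incompatible.'' So nothing beyond the reduction is established. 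Worse, (I) is stated for \emph{arbitrary} pairs $P_1,P_2$, with no common critical point, so it is formally stronger than the proposition itself, and you verify it only in degree one. Note also that the ``degenerate case'' you set aside is not actually direct: if $P_2\equiv c\neq 0$, the claim $Z_{P_1'}\subseteq Kull(Z_Q)$ with $Q=P_1(z)\overline{P_1(\bar z)}+|c|^2$ is a genuine instance of the proposition, since the critical points of $Q$ are not those of $P_1$ (one has $Q'(z)=P_1'(z)\overline{P_1(\bar z)}+P_1(z)\overline{P_1'(\bar z)}$, and the second term need not vanish at a zero of $P_1'$).

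The paper's proof avoids half-plane geometry entirely and uses the critical-point hypothesis through an algebraic identity instead. Its key lemma is that a sum of two ``Hermitian squares'' is a single one: solving recursively for coefficients $c_n$, one finds $M(z)=\sum_n z^n c_n$ with $Q(z)=M(z)\overline{M(\bar z)}$ (conceptually this works because $Q$ has real coefficients and $Q\ge 0$ on $\mathbb{R}$), whence $Z_Q=Z_M\cup\overline{Z_M}$. One then checks the identity $zL(z)=zM'(z)\overline{M(\bar z)}$, where $L(z)=P_1'(z)\overline{P_1(\bar z)}+P_2'(z)\overline{P_2(\bar z)}$. A common critical point $w$ satisfies $L(w)=0$, so for $w\neq 0$ either $M'(w)=0$, and classical Gauss--Lucas applied to the single polynomial $M$ gives $w\in Kull(Z_M)\subseteq Kull(Z_Q)$, or $M(\bar w)=0$, i.e.\ $w\in\overline{Z_M}\subseteq Z_Q$; the case $w=0$ follows from $Q'(0)=0$ and Gauss--Lucas applied to $Q$. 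To rescue your approach you would have to actually prove (I), which looks at least as hard as the proposition and may well require exactly this factorization; the efficient repair is to replace the half-plane reduction by the factorization argument.
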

Proof.  Set $\displaystyle P_1(z)= \sum_{n=0}^m z^n a_n $ and $\displaystyle P_2(z)= \sum_{n=0}^l z^n b_n $ and assume, without loss of generality, that $m\geq l$. Set $b_k=0$ for $k=l+1,\dots, m$ so that
\begin{equation}\label{equa21}  Q(z)= \sum_{n=0}^{2m} z^n \left[ \sum_{k=0}^n a_k \bar a_{n-k} + b_k \bar b_{n-k}  \right].\end{equation}
 Let us define $L(z)= P_1'(z)\overline{P_1(\bar z)} +  P_2'(z)\overline{P_2(\bar z)} $ for each $z\in\mathbb C$. Then $Z_{P_1'}\cap Z_{P_2'} \subset Z_{L}$ and  $Q'(z)=L(z)+\overline{L(\bar z)}$ for $z\in\mathbb C$. Moreover,
\[
zL(z)= z\left( \sum_{n=1}^m z^{n-1} na_n \right)  \left(  \sum_{n=0}^m z^n \bar a_n \right) +  z \left(  \sum_{n=1}^l z^{n-1}n b_n \right)\left(  \sum_{n=0}^l z^n \bar  b_n\right)
\]
\[=   \left( \sum_{n=0}^m z^{n} na_n \right)  \left(  \sum_{n=0}^m z^n \bar  a_n \right) +   \left(  \sum_{n=0}^l z^{n}n b_n \right)\left(  \sum_{n=0}^l z^n \bar  b_n\right)
\]
\[=   \sum_{n=0}^{2m} z^{n} \sum_{k=0}^n ka_k \bar a_{n-k} +  \sum_{n=0}^{2l} z^{n} \sum_{k=0}^n kb_k \bar b_{n-k},
\]
so that
\begin{equation}\label{equa22} zL(z) =   \sum_{n=0}^{2m} z^{n} \sum_{k=0}^n k ( \ a_k \bar a_{n-k} b_k \bar b_{n-k} \ ). \end{equation}
We now prove that there exists $c_0, c_1, \dots, c_m \in\mathbb C$ such that $$ \sum_{k=0}^n a_k \bar a_{n-k} + b_k \bar b_{n-k}  =\sum_{k=0}^n c_k \bar c_{n-k} , \quad k\in\{0,\dots, n\},$$
for each $n=0,1,\dots,m$. We have two cases:
\begin{enumerate}
\item If $a_0\neq 0$ or $b_0\neq 0$.
One can always choose  $c_0=x_0+iy_0\in\mathbb C$, with $x_0,y_0\in\mathbb R$ , such that $$ a_0 \bar a_0 + b_0 \bar b_0 = x_0^2+y_0^2= c_0 \bar c_0  .$$
Then we can continue and choose $c_1=x_1+iy_1 \in\mathbb C$, with $x_1,y_1\in\mathbb R$, such that $$ a_0 \bar a_1 + a_1\bar a_0 + b_0 \bar b_1 + b_1\bar b_0 = 2(x_0x_1+y_0+y_1) = c_0 \bar c_1 + c_1\bar c_0 .$$
Iterating the procedure, it is possible to find $c_n\in\mathbb C$ such that $$ \sum_{k=0}^n a_k \bar a_{n-k} + b_k \bar b_{n-k}= \sum_{k=0}^{n} c_k \bar c_{n-k}= c_0 \bar c_n + c_n\bar c_0 + \sum_{k=1}^{n-1} c_k \bar c_{n-k} $$
in fact it is possible to find $c_n$ such that
$$
 \sum_{k=0}^n a_k \bar a_{n-k} + b_k \bar b_{n-k}-\sum_{k=1}^{n-1} c_k \bar c_{n-k}=c_0 \bar c_n + c_n\bar c_0 .$$
Thus
$$ \sum_{k=0}^n a_k \bar a_{n-k} + b_k \bar b_{n-k}  =\sum_{k=0}^{n} c_k \bar c_{n-k} .$$
\item If  $a_0= 0$ and $b_0= 0$, let
$$r=\min\{ k\in \mathbb N \ \mid \  a_k\neq 0  {} \textrm{ or  } {} b_k\neq 0\}.$$
 In this case, we set  $c_0=\cdots=c_{r-1}=0$ and we look for $c_r\in\mathbb C$ such that
$$ \sum_{k=r}^{2r} a_k \bar a_{2r-k} + b_k \bar b_{2r-k}  = a_r\bar a_r + b_r\bar b_r =  c_r \bar c_r. $$
The following terms $c_k$, with $k>r$ are obtained reasoning as in (i).
\end{enumerate}

 Let us set $\displaystyle M(z)= \sum_{n=0}^{m} z^n c_n$, for $z\in\mathbb C$. Then formula (\ref{equa21}) gives \begin{equation}\label{equa24}
 Q(z)= M(z)\overline{M(\bar z)}, \quad \forall z\in \mathbb C.\end{equation}
 Therefore $Z_M \cup \overline{Z_M}= Z_{P^s} $, where $\overline{Z_M} =\{\bar z \mid \ z\in Z_M\}$. On the other hand, from  (\ref{equa22}) one obtains
$$zL(z) = \sum_{n=0}^{2m} z^{n} \sum_{k=0}^n k c_k \bar c_{n-k}  =  \left(  \sum_{n=0}^{m} z^{n}n c_n \right)\left( \sum_{n=0}^m z^n \bar c_n \right)= z \left(  \sum_{n=0}^{m} z^{n-1}n c_n \right)\left( \sum_{n=0}^m z^n \bar c_n \right),$$
so that
\begin{equation}\label{equa3} zL(z)=zM'(z)\overline{M(\bar z)},  \quad \forall z\in\mathbb C.\end{equation}
Thus given $w\in Z_{P_1'}\cap Z_{P_2'}$ we have two cases:
\begin{enumerate}
\item If $w\neq 0 $, recalling that $Z_{P_1}\cap Z_{P_2}\subset Z_{L}$, we have $w\in Z_{L}$. From (\ref{equa3}) it follows that either  $M'(w)=0 $ or $M(\bar w)=0$. Then $w\in Kull(Z_M)$ or $\bar w \in Z_M\subset Kull(Z_M)$. Using (\ref{equa24}) one obtains that  $w\in Kull (Z_{Q})$.

\item If $w=0$, then $Q'(w)=P_1'(w)\overline{P_1(\bar w)} +  P_1(w)\overline{P_1'(\bar w)}   +  P_2'(w)\overline{P_2(\bar w)}  + P_2(w)\overline{P_2'(\bar w)} =0  $ and, immediately, $w\in Kull(Z_Q)$.
\end{enumerate}
This concludes the proof. $\Box$
\\
We can now prove the main result:
\begin{theorem}
Let $P$ be a quaternionic polynomial. Then $Z_{P'}\subset Kull(Z_{P^s})$.
\end{theorem}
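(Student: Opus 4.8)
The plan is to obtain the theorem simply by composing the three ingredients prepared above, so the argument is short once Propositions \ref{prop1} and \ref{pro2} are in hand. First I would invoke the (nontrivial direction of) Proposition \ref{prop1}: it reduces the claim $Z_{P'}\subset Kull(Z_{P^s})$ to proving, for \emph{each} fixed $I\in\mathbb S^2$, the slicewise inclusion $Z_{P'\mid_{\mathbb C(I)}}\subset Kull(Z_{P^s\mid_{\mathbb C(I)}})$. So fix an arbitrary $I\in\mathbb S^2$ and identify $\mathbb C(I)$ with $\mathbb C$ (with the conjugation on $\mathbb C(I)$ corresponding to complex conjugation).

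Next I would use the splitting recalled in the Remark following Proposition \ref{prop1}: pick $J\in\mathbb S^2$ with $J$ orthogonal to $I$ and write $P\mid_{\mathbb C(I)}(z)=P_1(z)+P_2(z)J$, where $P_1,P_2$ are polynomials with coefficients in $\mathbb C(I)$. Differentiating termwise and using linearity, $P'\mid_{\mathbb C(I)}=P_1'+P_2'J$, hence $Z_{P'\mid_{\mathbb C(I)}}=Z_{P_1'}\cap Z_{P_2'}$. On the other hand, formula (\ref{Ps}) gives $P^s\mid_{\mathbb C(I)}(z)=P_1(z)\overline{P_1(\bar z)}+P_2(z)\overline{P_2(\bar z)}$, which is exactly the polynomial $Q$ appearing in Proposition \ref{pro2} with $\mathbb C(I)$ in the role of $\mathbb C$. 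Applying Proposition \ref{pro2} to this pair $P_1,P_2$ yields $Z_{P_1'}\cap Z_{P_2'}\subset Kull(Z_Q)$, that is, $Z_{P'\mid_{\mathbb C(I)}}\subset Kull(Z_{P^s\mid_{\mathbb C(I)}})$. Since $I\in\mathbb S^2$ was arbitrary, Proposition \ref{prop1} then delivers $Z_{P'}\subset Kull(Z_{P^s})$, which is the assertion.

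There is no real obstacle here: the heavy lifting is done in Proposition \ref{pro2} (the construction of the auxiliary polynomial $M$ via the coefficients $c_k$), while the theorem is just its quaternionic repackaging. The only thing requiring a little care is the bookkeeping in the isomorphism $\mathbb C(I)\cong\mathbb C$ — one must check that $P_1,P_2$ genuinely have $\mathbb C(I)$-valued coefficients so that Proposition \ref{pro2} applies verbatim, that the conjugation in the definition of $P^s$ restricts to the conjugation of $\mathbb C(I)$, and that a convex hull computed inside $\mathbb C(I)$ coincides with the one computed in $\mathbb H$; this last point is legitimate because all zeros of $P^s\mid_{\mathbb C(I)}$ lie in $\mathbb C(I)$. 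With these identifications the three pieces fit together immediately and no further estimate is needed.
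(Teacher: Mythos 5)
Your proposal is correct and follows essentially the same route as the paper's own proof: restrict to each slice $\mathbb C(I)$, split $P\mid_{\mathbb C(I)}=P_1+P_2J$, identify $P^s\mid_{\mathbb C(I)}$ with the polynomial $Q$ of Proposition \ref{pro2} via formula (\ref{Ps}), apply Proposition \ref{pro2} slicewise, and conclude with Proposition \ref{prop1}. The additional remarks you make about the identifications (coefficients of $P_1,P_2$ lying in $\mathbb C(I)$, compatibility of conjugations, convex hulls inside $\mathbb C(I)$ versus $\mathbb H$) are sound and, if anything, slightly more careful than the paper's two-line argument.
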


\begin{proof} Let us consider $P^s$, its restriction to the complex plane $\mathbb C(I)$ and the polynomial $Q(z)$ defined in Proposition \ref{pro2}. Then if we write $P|_{\mathbb C(I)}(z)=P_1(z)+P_2(z)J$ where $J\in\mathbb S^2$ and $J\perp I$ we have that $P^s|_{\mathbb C(I)}(z)=Q(z)$, see (\ref{Ps}). Proposition \ref{pro2} shows that $Z_{P'|_{\mathbb C(I)}}=Z_{P'_1}\cap Z_{P'_2}\subset Kull (P^s|_{\mathbb C(I)}    )$     and  the statement follows by Proposition \ref{prop1}.
\end{proof}

A well known   consequence of the complex Gauss-Lucas theorem, see \cite{E},  is that any complex polynomial $\displaystyle P(z) = \sum_{n=0}^{m} z^n b_n$, with $b_m\neq 0$,  has a zero of modulus at least
     \begin{equation}\label{GLin}
     \displaystyle   \max_{0< n \leq m-1} \left\{  \displaystyle   \left( \begin{array}{cc}m \\ n   \end{array}  \right)^{-1} \frac{  \displaystyle  |  b_{m-n}   |    }{  \displaystyle |b_{m} |   }     \right\}^{\frac{1}{n} } .
     \end{equation}
The generalization of this result in this framework is the following:

\begin{proposition}
  Given  $$\displaystyle P(q)= \sum_{n=0}^m q^n a_n, $$ where $a_n \in \mathbb H$ for all $n=1,\dots, m$,  such that $\displaystyle \sum_{k=0}^{2m} a_k \overline{a}_{2m-k}\neq 0$. Then $P$ or $P^c$  has a zero with  quaternionic modulus at least  	
	$$      \displaystyle   \max_{0< n \leq 2m-1} \left\{  \displaystyle   \left( \begin{array}{cc}2m \\ n   \end{array}  \right)^{-1} \frac{  \displaystyle  |   \sum_{k=0}^{2m-n} a_k \overline{a}_{2m-n -k}    |    }{  \displaystyle | \sum_{k=0}^{2m} a_k \overline{a}_{2m-k}   |   }     \right\}^{\frac{1}{\nu} } . $$
\end{proposition}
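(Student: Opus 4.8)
The plan is to reduce the statement to the classical estimate \eqref{GLin} applied to the symmetrized polynomial $P^s$, which has real coefficients, and then to transport the zero so obtained back to a zero of $P$ or of $P^c$ using the factorization $P^s=P*P^c$ together with the evaluation formula for the $*$-product recalled in Section~2.

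First I would record the elementary facts about $P^s$. By Definition~\ref{def1} one has $P^s(q)=\sum_{n=0}^{2m}q^{n}c_n$ with $c_n=\sum_{k=0}^{n}a_k\overline{a}_{n-k}$, where $a_k:=0$ for $k>m$; a one-line conjugation computation gives $\overline{c_n}=c_n$, so every $c_n$ is real. In particular the leading coefficient is $c_{2m}=\sum_{k=0}^{2m}a_k\overline{a}_{2m-k}=|a_m|^{2}$, which is exactly the quantity assumed to be nonzero. Hence $P^s$ is a polynomial with \emph{real} coefficients of degree exactly $2m$, and as such it can be regarded as a complex polynomial of degree $2m$ in the variable $z\in\mathbb{C}=\mathbb{C}(\mathbf{i})\subset\mathbb{H}$.

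Next I would invoke the classical consequence \eqref{GLin} of the complex Gauss--Lucas theorem, with $m$ replaced by $2m$ and $b_j$ replaced by $c_j$. It produces a complex number $\alpha$ with $P^s(\alpha)=0$ and
$$
|\alpha|\;\ge\;\max_{0<n\le 2m-1}\left\{\binom{2m}{n}^{-1}\frac{\bigl|\sum_{k=0}^{2m-n}a_k\overline{a}_{2m-n-k}\bigr|}{\bigl|\sum_{k=0}^{2m}a_k\overline{a}_{2m-k}\bigr|}\right\}^{1/n},
$$
which is precisely the asserted lower bound (the exponent $\nu$ in the statement being $n$). Since $P^s$ has real coefficients, $\alpha$ is also a zero of $P^s$ viewed as a quaternionic polynomial, and its quaternionic modulus coincides with its complex modulus.

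Finally I would transfer $\alpha$ to a zero of $P$ or of $P^c$. Since $P^s=P*P^c$, the evaluation formula for the $*$-product shows that $P^s(\alpha)=0$ forces either $P(\alpha)=0$, so that $P$ has the zero $\alpha$; or $P(\alpha)\neq 0$ and then $P^c\bigl(P(\alpha)^{-1}\alpha\,P(\alpha)\bigr)=0$, so that $P^c$ has a zero $\beta=P(\alpha)^{-1}\alpha\,P(\alpha)$ lying on the sphere $[\alpha]$, hence with $|\beta|=|\alpha|$. In either case $P$ or $P^c$ has a zero whose quaternionic modulus is at least the displayed quantity, which completes the proof. This argument is essentially bookkeeping: the only points that genuinely need care are the verification that $P^s$ has real coefficients and the identification of its leading coefficient with the hypothesis, since these are exactly what allow \eqref{GLin} to be applied verbatim with degree $2m$. (Alternatively, one could replace the last paragraph by an appeal to Remark~\ref{rem11}, which already identifies $[\alpha]$ as a sphere of zeros of $P$ itself and thus gives a slightly stronger conclusion; the formulation ``$P$ or $P^c$'' is the one that follows most directly from the factorization $P^s=P*P^c$.)
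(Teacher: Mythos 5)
Your proof is correct and follows essentially the same route as the paper: both apply the classical bound (\ref{GLin}) to the real-coefficient polynomial $P^s$ of degree $2m$ (whose leading coefficient is the quantity $\sum_{k=0}^{2m}a_k\overline{a}_{2m-k}=|a_m|^2$ assumed nonzero) and then transfer the resulting zero back to $P$ or $P^c$. You are in fact somewhat more careful than the paper at the transfer step, where the paper simply invokes $Z_P\cup Z_{P^c}=Z_{P^s}$ and leaves the rest implicit, while you use the $*$-product evaluation formula to exhibit an explicit zero of $P$ or of $P^c$ on the sphere $[\alpha]$, hence of the same modulus.
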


\begin{proof} By Definition \ref{def1} one has  that    $Z_{P} \cup Z_{P^c} =  Z_{P^s} $
 and  all coefficients of $  P^s$ are the real numbers $\displaystyle    b_n  = \sum_{k=0}^n a_k\bar{a}_{n-k}$ for $n=0,\dots, 2m$. Then $P^s$ restricted to each slice can be considered as a complex polynomial and it has a zero point of  modulus at least the constant given in (\ref{GLin}).
 \end{proof}


\begin{thebibliography}{00}


\bibitem{A}   L. Ahlfors, {\em Complex analysis}. McGraw-Hill, New York (1979).


\bibitem{CSS}
F. Colombo, I. Sabadini, and D.C. Struppa.
\newblock {\em Noncommutative functional calculus, volume 289 of \em Progress
  in Mathematics}.
\newblock{ Birkh\"auser/Springer Basel AG, Basel,} 2011.
\newblock {Theory and applications of slice hyperholomorphic functions}.



\bibitem{E}   Edrei, A. {\em Power series  having partial sums with zeros in half plane}. Proc. Amer.  Math. Soc.,  9 (1958),   pp. 320-324.

    \bibitem{GSS}
G.~Gentili, C.~Stoppato, and D.C. Struppa.
\newblock {\em Regular functions of a quaternionic variable}.
\newblock{ Springer Monographs in Mathematics. Springer, Heidelberg,} 2013.

\bibitem{gm} B. Gordon, T S. Motzkin, {\it On the zeros of polynomials over division rings}, Trans. Amer. Math. Soc. 116, (1965),
218--226.

\bibitem{lam} T. Y.~Lam, {\em A first course in noncommutative rings}. Graduate Texts in Mathematics, {\bf 123}, Springer-Verlag, New York, 1991.


\bibitem{M}  Marden M., {\em Geometry of the zeros of a polynomial in a complex variable}. Math. Surveys, No. III, American Mathematical Society, 1949.

\bibitem{V} Fabio Vlacci,  {\em The Gauss-Lucas Theorem for Regular Quaternionic Polynomials}. In
Hypercomplex Analysis and its Applications, Trends in Mathematics, (2011)  275--282.



\end{thebibliography}
\end{document}